\theoremstyle{plain}
 \newtheorem{theorem}{Theorem}
 \newtheorem{lemma}[theorem]{Lemma}
 \newtheorem{corollary}[theorem]{Corollary}
 \newtheorem{claim}[theorem]{Claim}
\theoremstyle{definition}
 \newtheorem{definition}[theorem]{Definition}
 \newtheorem{remark}[theorem]{Remark}
\numberwithin{equation}{section}
\renewcommand{\a}{\alpha}
\renewcommand{\k}{\kappa}
\renewcommand{\o}{\omega}
\newcommand{\ps}{\emptyset}
\newcommand{\sledi}{\Rightarrow}
\newcommand{\rest}{\upharpoonright}
\renewcommand{\c}{\mathfrak c}
\newcommand{\cu}{\mathcal U}
\newcommand{\cv}{\mathcal V}
\newcommand{\seq}[1]{\left<#1\right>}
\newcommand{\set}[1]{\left\{#1\right\}}
\newcommand{\abs}[1]{\left\vert#1\right\vert}
\newcommand{\br}[1]{\left(#1\right)}
\newcommand{\range}{\operatorname{range}}
\renewcommand{\subset}{\subseteq}
\author[B. Kuzeljevic]{Borisa Kuzeljevic}
\address[Kuzeljevic]{Department of Mathematics and Informatics, Faculty of Sciences, University of Novi Sad, Serbia.}
\thanks{The first author was partially supported by the Science Fund of the Republic of Serbia grant no. 6062228}
\curraddr{}
\email{\href{borisha@dmi.uns.ac.rs}{borisha@dmi.uns.ac.rs}}
\urladdr{\url{https://people.dmi.uns.ac.rs/~borisha}}
\dedicatory{}
\author[D. Raghavan]{Dilip Raghavan}
\thanks{This paper was completed when the second author was a Fields Research Fellow.
The second author thanks the Fields Institute for its kind hospitality.}
\address[Raghavan]{Department of Mathematics \\
 National University of Singapore\\
 Singapore 119076.}
\email{\href{dilip.raghavan@protonmail.com}{dilip.raghavan@protonmail.com}}
\urladdr{\url{https://dilip-raghavan.github.io/}}
\author[J. L. Verner]{Jonathan L. Verner}
\address[Verner]{Department of Logic, Faculty of Arts, Charles University, Prague, Czech Republic.}
\curraddr{}
\email{\href{jonathan.verner@matfyz.cz}{jonathan.verner@matfyz.cz}}
\urladdr{\url{https://jonathan.temno.eu/}}
\dedicatory{}
\title[Lower bounds of sets of P-points]{Lower bounds of sets of P-points}
\keywords{ultrafilter, Rudin-Keisler order, P-point}
\subjclass[2010]{03E50, 03E05, 03E20, 03E35, 03E40, 54D35, 54D80}
\thanks{}
\date{\today}
\begin{document}

\begin{abstract}
We show that MA$_{\k}$ implies that each collection of ${P}_{\c}$-points of size at most $\k$ which has a $P_{\c}$-point as an $RK$ upper bound also has a ${P}_{\c}$-point as an $RK$ lower bound.
\end{abstract}

\maketitle

\section{Introduction}

The Rudin-Keisler ($RK$) ordering of ultrafilters has received considerable attention since its introduction in the 1960s. For example, one can take a look at \cite{rudin,Rudin:66,mer,blass,kn,RS,RK}, or \cite{RV}.
Recall the definition of the Rudin-Keisler ordering.

\begin{definition}
	Let $\cu$ and $\cv$ be ultrafilters on $\o$. We say that $\cu\le_{RK}\cv$ if there is a function $f$ in $\o^{\o}$ such that $A\in \cu$ if and only if $f^{-1}(A)\in\cv$ for every $A\subset \o$.
\end{definition}

When $\cu$ and $\cv$ are ultrafilters on $\o$ and $\cu\le_{RK}\cv$, we say that $\cu$ is \emph{Rudin-Keisler (RK) reducible} to $\cv$, or that $\cu$ is \emph{Rudin-Keisler (RK) below} $\cv$.
In case $\cu\le_{RK}\cv$ and $\cv\le_{RK}\cu$ both hold, then we say that $\cu$ and $\cv$ are \emph{Rudin-Keisler equivalent}, and write $\cu\equiv_{RK}\cv$.

Very early in the investigation of this ordering of ultrafilters, it was noticed that the class of P-points is particularly interesting.
Recall that an ultrafilter $\cu$ on $\o$ is called a \emph{P-point} if for any $\set{a_n:n<\o}\subset\cu$ there is an $a\in\cu$ such that $a\subset^* a_n$ for every $n<\o$, i.e. the set $a\setminus a_n$ is finite for every $n<\o$.
P-points were first constructed by Rudin in \cite{rudin}, under the assumption of the Continuum Hypothesis.
The class of P-points forms a downwards closed initial segment of the class of all ultrafilters.
In other words, if $\cu$ is a P-point and $\cv$ is any ultrafilter on $\o$ with $\cv \; {\leq}_{RK} \; \cu$, then $\cv$ is also a P-point.
Hence understanding the order-theoretic structure of the class of P-points can provide information about the order-theoretic structure of the class of all ultrafilters on $\o$.
One of the first systematic explorations of the order-theoretic properties of the class of all ultrafilters, and particularly of the class of P-points, under ${\leq}_{RK}$ was made by Blass in \cite{blassphd} and \cite{blass}, where he proved many results about this ordering under the assumption of Martin's Axiom (MA).

Let us note here that it is not possible to construct P-points in ZFC only, as was proved by Shelah (see \cite{proper}).
Thus some set-theoretic assumption is needed to ensure the existence of P-points.
The most commonly used assumption when studying the order-theoretic properties of the class of P-points is MA.
Under MA every ultrafilter has character $\c$.
Therefore, the ${P}_{\c}$-points are the most natural class of P-points to focus on under MA.
Again, the ${P}_{\c}$-points form a downwards closed subclass of the P-points.
\begin{definition} \label{def:pc}
 An ultrafilter $\cu$ on $\o$ is called a $P_{\mathfrak c}$\emph{-point} if for every $\a<\mathfrak c$ and any $\set{a_i:i<\a}\subset \cu$ there is an $a\in\cu$ such that $a\subset^* a_i$ for every $i<\a$.
\end{definition}
In Theorem 5 from \cite{blass}, Blass proved in ZFC that if $\set{\cu_n:n<\o}$ is a countable collection of P-points and if there is a P-point $\cv$ such that $\cu_n\le_{RK}\cv$ for every $n<\o$, then there is a P-point $\cu$ such that $\cu\le_{RK}\cu_n$ for every $n<\o$.
In other words, if a countable family of P-points has an upper bound, then it also has a lower bound.

The main result of this paper generalizes Blass' theorem to families of ${P}_{\c}$-points of size less than $\c$ under MA.
More precisely, if MA holds and a family of ${P}_{\c}$-points of size less than $\c$ has an $RK$ upper bound which is a ${P}_{\c}$-point, then the family also has an RK lower bound.

Blass proved his result via some facts from \cite{blassmodeltheory} about non-standard models of complete arithmetic.
In order to state these results, we introduce a few notions from \cite{blassmodeltheory}.
The language $L$ will consist of symbols for all relations and all functions on $\o$.
Let $N$ be the standard model for this language, its domain is $\o$ and each relation or function denotes itself.
Let $M$ be an elementary extension of $N$, and let ${}^*R$ be the relation in $M$ denoted by $R$, and let ${}^*f$ be the function in $M$ denoted by $f$.
Note that if $a\in M$, then the set $\set{{}^*f(a):f:\o\to\o}$ is the domain of an elementary submodel of $M$.
Submodel like this, i.e.\@ generated by a single element, will be called \emph{principal}.
It is not difficult to prove that a principal submodel generated by $a$ is isomorphic to the ultrapower of the standard model by the ultrafilter ${\cu}_{a} = \{X \subset \omega: a \in {}^{\ast}{X}\}$.
If $A,B\subset M$, we say that they are \emph{cofinal with each other} iff $(\forall a\in A)(\exists b\in B)\ a\ {}^*\!\!\le b$ and $(\forall b\in B)(\exists a\in A)\ b\ {}^*\!\!\le a$.
Finally, we can state Blass' theorem.

\begin{theorem}[Blass, Theorem 3 in \cite{blassmodeltheory}]\label{t:blassmodel}
	Let $M_i$ ($i<\o$) be countably many pairwise cofinal submodels of $M$.
	Assume that at least one of the $M_i$ is principal.
	Then $\bigcap_{i<\o}M_i$ is cofinal with each $M_i$, in fact it contains a principal submodel cofinal with each $M_i$.
\end{theorem}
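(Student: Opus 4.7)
The strategy is to produce an element $b \in \bigcap_{i<\o} M_i$ such that the principal submodel $\langle b\rangle$ is cofinal with each $M_i$. By pairwise cofinality it suffices to arrange cofinality of $\langle b\rangle$ with the distinguished principal submodel, say $M_0 = \langle a_0\rangle$. Since $\bigcap_{i<\o} M_i \subset M_0$, any such $b$ has the form $b = {}^*h(a_0)$ for some $h : \o \to \o$, and a short calculation shows that $\langle b\rangle$ is cofinal with $\langle a_0\rangle$ if and only if $h$ is finite-to-one on a set in the ultrafilter $\cu_{a_0} = \set{X\subset\o : a_0 \in {}^*X}$: given any $g : \o \to \o$, the function $f(m) = \max\set{g(n) : h(n) = m}$ is total precisely when $h$ is finite-to-one, and in that case witnesses ${}^*g(a_0) \ {}^*\!\!\le {}^*f(b)$. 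Thus the problem reduces to finding a single finite-to-one $h$ with ${}^*h(a_0) \in M_i$ for every $i \geq 1$.

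For each individual $i$, I would use cofinality of $M_i$ with $M_0$ in both directions, together with the principality of $M_0$ and the closure of $M_i$ under arbitrary functions on $\o$, to produce a finite-to-one function $h_i$ with ${}^*h_i(a_0) \in M_i$. The technical step is to sandwich an element $c \in M_i$ between $a_0$ and ${}^*g(a_0)$ for a strictly increasing $g : \o \to \o$ (obtained via monotone envelope) and then, using pairing functions and the Skolem closure of $c$ inside $M_i$, extract an element of $M_i \cap M_0$ whose generating function is finite-to-one.

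The main obstacle is merging the countably many $h_i$'s into a single $h$ with ${}^*h(a_0) \in M_i$ for every $i$, while preserving the finite-to-one property. Naive interleavings on a partition of $\o$ fail because ${}^*h(a_0) \in M_i$ is an equality condition in $M$ forcing $h$ to agree with $h_i$ on a set in $\cu_{a_0}$, and these agreement sets for different $i$ may be mutually exclusive in $\cu_{a_0}$. My plan to overcome this is an $\o$-stage diagonal construction exploiting the countability of the family $\set{M_i : i<\o}$, the closure of each $M_i$ under unary functions, and the elementarity of each $M_i \prec M$: at stage $n$ the partial $h$ is refined to be compatible with $M_n$, while the inductive hypothesis maintains compatibility with $M_i$ for $i \le n$. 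Elementarity supplies the required extensions at each stage, and the limit function $h$ is finite-to-one $\cu_{a_0}$-a.e.\ with ${}^*h(a_0) \in M_i$ for every $i$, yielding the desired element $b$.
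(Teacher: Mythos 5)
First, a point of comparison: the paper does not actually prove this theorem --- it is quoted from Blass's 1972 paper --- but the body of the paper proves a generalization (Theorem \ref{theorem3}) whose supporting lemmas display exactly the machinery such an argument needs. Measured against that, your skeleton is the right one: reduce to producing a single finite-to-one $h$ with ${}^*h(a_0)\in M_i$ for all $i$, note that finite-to-one-ness of the generating function is what gives cofinality with $M_0$, and reformulate ``${}^*h(a_0)\in M_i$'' as ``$h$ factors as $e_i\circ h_i$ on a set in $\mathcal{U}_{a_0}$.'' Up to that point you are on the same route as Blass and as the paper's Lemmas \ref{ensuringensuring}--\ref{mainlemma}.

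The gap is that you stop exactly where the proof begins. You correctly flag the merging of the $h_i$ as ``the main obstacle'' and then dispose of it with an ``$\omega$-stage diagonal construction'' in which ``elementarity supplies the required extensions at each stage.'' But elementarity of $M_n\prec M$ does not by itself let you extend a finite partial $h$ while staying compatible with $M_0,\dots,M_n$: compatibility with $M_i$ forces $h$ to be constant on each fiber $h_i^{-1}(k)$ for large $k$, and the fibers of distinct $h_i$ interlock, so a commitment made for the sake of $M_i$ can contradict one made for $M_j$. Two further ideas are needed and are absent from your sketch: (1) the family $\{h_i\}$ must first be made coherent --- by passing to a decreasing chain of principal submodels and re-choosing the functions, as in Lemma \ref{ensuringensuring}, so that for $i<j$ the fibers of $h_i$ eventually refine those of $h_j$; and (2) one must show that every $n$ lies in a \emph{finite} set closed under $h_i^{-1}(h_i''(\cdot))$ simultaneously for all $i$ in a given finite set (the $(A,\mathcal F)$-closed sets of Lemma \ref{ensuringcondition}, whose existence is itself a nontrivial tree argument using coherence and finite-to-one-ness). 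Only with these in hand do the finite approximations extend at each stage without conflict; without them the diagonal construction you describe does not go through, so the proposal as written is not a proof.
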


After proving this theorem, Blass states that it is not known to him whether Theorem \ref{t:blassmodel} can be extended to larger collections of submodels.
The proof of our main result clarifies this, namely in Theorem \ref{theorem3} below we prove that under MA it is possible to extend it to collections of models of size less than $\c$ provided that there is a principal model that is isomorphic to an ultrapower induced by a ${P}_{\c}$-point.
Then we proceed and use this result to prove Theorem \ref{maintheorem} where we extend Theorem 5 from \cite{blass} to collections of fewer than $\c$ many ${P}_{\c}$-points.

Recall that MA$_{\a}$ is the statement that for every partial order $P$ which satisfies the countable chain condition and for every collection $\mathcal{D} = \{{D}_{i}: i < \a\}$ of dense subsets of $P$, there is a filter $G\subset P$ such that $G\cap {D}_{i}\neq\ps$ for every $i < \a$.

\section{The lower bound}

In this section we prove the results of the paper.
We begin with a purely combinatorial lemma about functions.

\begin{definition}\label{closedness}
 Let $\alpha$ be an ordinal, let $\mathcal F=\set{f_i:i<\a}\subset \o^{\o}$ be a family of functions, and let $A$ be a subset of $\a$.
 We say that a set $F\subset \o$ is ($A,\mathcal F$)\emph{-closed} if $f_i^{-1}(f_i''F)\subset F$ for each $i\in A$.
\end{definition}

\begin{remark}
 Notice that if $F$ is ($A,\mathcal F$)-closed, then $f_i^{-1}(f_i''F)=F$ for each $i\in A$.
\end{remark}

\begin{lemma}\label{finiteunion}
 Let $\a$ be an ordinal, let $\mathcal F=\set{f_i:i<\a}\subset \o^{\o}$ be a family of functions, and let $A$ be a subset of $\a$.
 Suppose that $m<\o$, and that $F_k$ is ($A,\mathcal F$)-closed subset of $\o$, for each $k<m$.
 Then the set $F=\bigcup_{k<m}F_k$ is ($A,\mathcal F$)-closed.
\end{lemma}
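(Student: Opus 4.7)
The plan is to verify the defining inclusion $f_i^{-1}(f_i''F)\subset F$ for each $i\in A$ by a direct pointwise argument, decomposing $F$ into the finitely many pieces $F_k$ and using the $(A,\mathcal F)$-closedness of each piece.

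More concretely, I would fix $i\in A$ and pick an arbitrary $x\in f_i^{-1}(f_i''F)$. By definition of the preimage and direct image, there is some $y\in F$ with $f_i(y)=f_i(x)$. Since $F=\bigcup_{k<m}F_k$, one can choose $k<m$ such that $y\in F_k$, and then $f_i(x)=f_i(y)\in f_i''F_k$. Hence $x\in f_i^{-1}(f_i''F_k)$. Now invoke the hypothesis that $F_k$ is $(A,\mathcal F)$-closed, which gives $f_i^{-1}(f_i''F_k)\subset F_k$, so $x\in F_k\subset F$. Since $x$ and $i$ were arbitrary, this establishes $(A,\mathcal F)$-closedness of $F$.

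There is essentially no obstacle here: the argument is a short set-theoretic manipulation that does not use finiteness of $m$ in any essential way (any union of $(A,\mathcal F)$-closed sets would work by the same reasoning), and it is stated for finite $m$ only because that is what is needed later in the paper. The key conceptual point to note is the one already recorded in the remark following Definition \ref{closedness}, namely that $(A,\mathcal F)$-closedness is equivalent to the saturation condition $f_i^{-1}(f_i''F_k)=F_k$; it is precisely the $\subset$ direction of this equality for each piece that transports an element of $f_i^{-1}(f_i''F)$ back into the piece of $F$ from which its $f_i$-partner was drawn.
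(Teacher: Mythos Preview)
Your proof is correct and is essentially identical to the paper's own argument: fix $i\in A$, take an element of $f_i^{-1}(f_i''F)$, find a witness in some $F_k$, and use the $(A,\mathcal F)$-closedness of $F_k$ to conclude. Your observation that finiteness of $m$ is not actually needed is also accurate, though the paper does not remark on this.
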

 
\begin{proof}
 To prove that $F$ is ($A,\mathcal F$)-closed take any $i\in A$, and $n\in f_i^{-1}(f_i''F)$.
 This means that there is some $n'\in F$ such that $f_i(n)=f_i(n')$.
 Let $k<m$ be such that $n'\in F_k$.
 Then $n\in f_i^{-1}(f_i''F_k)$.
 Since $F_k$ is ($A,\mathcal F$)-closed, $n\in f_i^{-1}(f_i''F_k)\subset F_k$.
 Thus $n\in F_k\subset F$.
\end{proof}

\begin{lemma}\label{ensuringcondition}
 Let $\a<\mathfrak c$ be an ordinal.
 Let $\mathcal F=\set{f_i:i<\a}\subset \o^{\o}$ be a family of finite-to-one functions.
 Suppose that for each $i,j<\a$ with $i<j$, there is $l<\o$ such that $f_j(n)=f_j(m)$ whenever $f_{i}(n)=f_{i}(m)$ and $n,m\ge l$.
 Then for each finite $A\subset \a$, and each $n<\o$, there is a finite $(A,\mathcal F)$-closed set $F$ such that $n\in F$. 
\end{lemma}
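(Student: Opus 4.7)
The plan is to construct the required $F$ in closed form, without any iteration, by exploiting coherence. The coherence hypothesis says that for $i<j$ in $\a$, the fibers of $f_i$ are \emph{almost} refinements of the fibers of $f_j$: above some threshold $l_{i,j}$, any two points agreeing under $f_i$ also agree under $f_j$. So once we fix $j = \max A$ and take the $f_j$-fiber of $n$, any further closure under $f_i$ for $i \in A' := A \setminus \{j\}$ can only pull in new points through a bounded initial segment of $\o$ on which coherence is not yet in effect.

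Concretely, for each $i \in A'$ fix $l_{i,j} < \o$ witnessing coherence for the pair $(i,j)$, set $L = 1 + \max\{l_{i,j} : i \in A'\}$ (with $L = 0$ if $A' = \ps$), and let $B = [0,L)$. Since $B$ is finite and each $f_i$ is finite-to-one, the set
\[
V = \{f_j(n)\} \cup f_j''B \cup \bigcup_{i \in A'} f_j''\bigl(f_i^{-1}(f_i''B)\bigr)
\]
is finite, so $F = f_j^{-1}(V)$ is also finite (because $f_j$ is finite-to-one), and $n \in F$ since $f_j(n) \in V$.

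It remains to verify that $F$ is $(A,\mathcal F)$-closed. Closure under $f_j$ is automatic because $F$ is a union of $f_j$-fibers. Fix $i \in A'$ and suppose $m \in f_i^{-1}(f_i''F)$, witnessed by $m' \in F$ with $f_i(m) = f_i(m')$; the claim is that $f_j(m) \in V$. If $m,m' \ge L$, coherence gives $f_j(m) = f_j(m') \in V$. If $m < L$ then $m \in B$ and $f_j(m) \in f_j''B \subset V$. If $m \ge L$ but $m' < L$, then $m' \in B$ and $m \in f_i^{-1}(f_i(m')) \subset f_i^{-1}(f_i''B)$, so $f_j(m) \in f_j''\bigl(f_i^{-1}(f_i''B)\bigr) \subset V$. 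In every case $m \in F$, as required.

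The main technical point is the bookkeeping around the threshold $L$: below $L$ coherence can fail, so $V$ must already contain the $f_j$-image of every element that could be dragged into $F$ through a mixed pair with one member below $L$ and the other above. The definition of $V$ packages all such possibilities into a single finite set in one shot, which is why neither induction on $|A|$ nor an iterative closure construction (which would have to be shown to terminate) is required.
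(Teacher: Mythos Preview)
Your proof is correct, and it is considerably more direct than the paper's. The paper first establishes that for all sufficiently large $k$ the single fiber $f_{i_0}^{-1}(f_{i_0}''\{k\})$ is already $(A,\mathcal F)$-closed (where $i_0=\max A$); it then builds the iterated closure of $\{n\}$ as a finitely branching tree of successive $f_i$-saturations, checks that the union of ranges is $(A,\mathcal F)$-closed, and finally invokes K\"onig's Lemma together with the earlier claim to show the tree is finite. Your argument bypasses the tree entirely by absorbing the finite initial segment $B=[0,L)$ on which coherence may fail directly into $V$: any point dragged in by an $f_i$-link either lies below $L$ (handled by $f_j''B\subset V$), is linked to something below $L$ (handled by $f_j''\bigl(f_i^{-1}(f_i''B)\bigr)\subset V$), or is linked entirely above $L$, where coherence collapses $f_i$-agreement to $f_j$-agreement. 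The paper's route yields the minimal $(A,\mathcal F)$-closed set containing $n$, and its termination argument is where the finite-to-one hypothesis on \emph{all} the $f_i$ is really exercised; your route trades minimality for a closed-form description and a three-line case analysis, with finiteness coming for free from finite-to-one-ness of each $f_i$ applied once. One triviality: your construction needs $A\neq\emptyset$ to set $j=\max A$; the empty case is handled by $F=\{n\}$, as the paper also notes at the outset.
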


\begin{proof}
 First, if $A$ is empty, then we can take $F=\set{n}$.
 So fix a non-empty finite $A\subset\a$, and $n<\o$.
 For each $i,j\in A$ such that $i<j$, by the assumption of the lemma, take $l_{ij}<\o$ such that for each $n,m\ge l_{ij}$, if $f_i(n)=f_i(m)$, then $f_j(n)=f_j(m)$.
 Since $A$ is a finite set, there is $l=\max\set{l_{ij}:i,j\in A,\ i<j}$.
 So $l$ has the property that for every $i,j\in A$ with $i<j$, if $f_i(n)=f_i(m)$ and $n,m\ge l$, then $f_j(n)=f_j(m)$.
 
 Let $i_0=\max(A)$.
 Clearly, $f_i''l$ is finite for each $i\in A$, and since each $f_i$ is finite-to-one the set ${f}^{-1}_{i}\br{f_i''l}$ is finite for every $i\in A$.
 Since the set $A$ is also finite, there is $l'<\o$ such that $\bigcup_{i\in A}f_i^{-1}\br{f_i''l}\subset l'$.
 Again, since $f_{i_0}$ is finite-to-one there is $l''<\o$ such that $f^{-1}_{i_0}\br{f_{i_0}''l'}\subset l''$.
 Note that by the definition of numbers $l'$ and $l''$, we have $l''\ge l'\ge l$.
 
 \begin{claim}\label{closedset}
  For all $k<\o$, if $k\ge l''$, then the set $f_{i_0}^{-1}(f_{i_0}''\set{k})$ is ($A,\mathcal F$)-closed.
 \end{claim}
 
 \begin{proof}
  Fix $k\ge l''$ and let $X=f_{i_0}^{-1}(f_{i_0}''\set{k})$.
  First observe that $X\cap l'=\ps$.
  To see this suppose that there is $m\in X\cap l'$.
  Since $m\in X$, $f_{i_0}(m)=f_{i_0}(k)$. 
  Together with $m\in l'$, this implies that $k\in f_{i_0}^{-1}(f_{i_0}''\set{m})\subset f_{i_0}^{-1}(f_{i_0}''l')\subset l''$.
  Thus $k<l''$ contradicting the choice of $k$.
  Secondly, observe that if $m<l$ and $k'\in X$, then $f_i(m)\neq f_i(k')$ for each $i\in A$.
  To see this, fix $m<l$ and $k'\in X$, and suppose that for some $i\in A$, $f_i(m)=f_i(k')$.
  This means that $k'\in f_i^{-1}(f_i''\set{m})\subset f_i^{-1}(f_i''l)\subset l'$ contradicting the fact that $X\cap l'=\ps$.
  
  Now we will prove that $X$ is ($A,\mathcal F$)-closed.
  Take any $i\in A$ and any $m\in f_i^{-1}(f_i''X)$.
  We should prove that $m\in X$.
  Since $m\in f_i^{-1}(f_i''X)$, $f_i(m)\in f_i''X$ so there is some $k'\in X$ such that $f_i(m)=f_i(k')$.
  By the second observation, $m\ge l$.
  By the first observation $k'\ge l'\ge l$.
  By the assumption of the lemma, since $m,k'\ge l$, and $f_i(m)=f_i(k')$, it must be that $f_{i_0}(m)=f_{i_0}(k')$.
  Since $k'\in X=f_{i_0}^{-1}(f_{i_0}''\set{k})$, it must be that $f_{i_0}(k)=f_{i_0}(k')=f_{i_0}(m)$.
  This means that $m\in f_{i_0}^{-1}(f_{i_0}''\set{k})=X$ as required.
  Thus $f_{i_0}^{-1}(f_{i_0}''\set{k})$ is $(A,\mathcal F)$-closed.
 \end{proof}

 Now we inductively build a tree $T\subset \o^{<\o}$ we will be using in the rest of the proof.
 Fix a function $\Phi:\o\to\o^{<\o}$ so that $\Phi^{-1}(\sigma)$ is infinite for each $\sigma\in \o^{<\o}$.
 For each $m<\o$ let $u_m=\Phi(m)\br{\abs{\Phi(m)}-1}$, i.e. $u_m$ is the last element of the sequence $\Phi(m)$.
 Let $T_0=\set{\ps,\seq{n}}$ (recall that $n$ is given in the statement of the lemma).
 Suppose that $m\ge 1$, and that $T_m$ is given.
 If $\Phi(m)$ is a leaf node of $T_m$, then let
 \[\textstyle
  Z_m=\left(\bigcup_{i\in A}f_i^{-1}(f_i''\set{u_m})\right)\setminus\left(\bigcup_{\eta\in T_m}\range(\eta)\right),
 \]
 and $T_{m+1}=T_m\cup\set{\Phi(m)^{\frown}\seq{k}:k\in Z_m}$.
 If $\Phi(m)$ is not a leaf node of $T_m$, then $T_{m+1}=T_m$.
 Finally, let $T=\bigcup_{m<\o}T_m$ and $F=\bigcup_{\eta\in T}\range(\eta)$.
 
 \begin{claim}\label{subclaimtree}
  If $\sigma$ is a non-empty element of the tree $T$, then there is $m_0\ge 1$ such that $\sigma$ is a leaf node of $T_{m_0}$, that $\sigma=\Phi(m_0)$ and that $$\textstyle\bigcup_{i\in A}f_i^{-1}(f_i''\set{u_{m_0}})\subset \bigcup_{\eta\in T_{m_0+1}}\range(\eta).$$  
 \end{claim}
 
 \begin{proof}
  Fix a non-empty $\sigma$ in $T$.
  Let $m_1=\min\set{k<\o:\sigma\in T_k}$.
  Since $\abs{\sigma}>0$, $\sigma$ is a leaf node of $T_{m_1}$.
  Consider the set $W=\set{m\ge m_1:\Phi(m)=\sigma}$.
  Since the set $\set{m<\o:\Phi(m)=\sigma}$ is infinite, $W$ is non-empty subset of positive integers, so it has a minimum.
  Let $m_0=\min W$.
  Note that if $m_0=m_1$, then $\sigma$ is a leaf node of $T_{m_0}$.
  If $m_0>m_1$, by the construction of the tree $T$, since $\Phi(k)\neq \sigma$ whenever $m_1\le k<m_0$, it must be that $\sigma$ is a leaf node of every $T_k$ for $m_1<k\le m_0$.
  Thus $\sigma$ is a leaf node of $T_{m_0}$ and $\Phi(m_0)=\sigma$.
  Again by the construction of the tree $T$, we have $T_{m_0+1}=T_{m_0}\cup \set{\sigma^{\frown}\seq{k}:k\in Z_{m_0}}$.
  This means that $$\textstyle\bigcup_{\eta\in T_{m_0+1}}\range(\eta)=Z_{m_0}\cup \bigcup_{\eta\in T_{m_0}}\range(\eta).$$
  Finally, the definition of $Z_{m_0}$ implies that
  \[\textstyle
   \bigcup_{i\in A}f_i^{-1}(f_i''\set{u_{m_0}})\subset Z_{m_0}\cup\bigcup_{\eta\in T_{m_0}}\range(\eta)=\bigcup_{\eta\in T_{m_0+1}}\range(\eta),
  \]
  as required.
 \end{proof}

 \begin{claim}\label{closedtree}
  The set $F$ is ($A,\mathcal F$)-closed, and contains $n$ as an element.
 \end{claim}
 
 \begin{proof}
  Since $\seq{n}\in T_0$, $n\in F$.
  To see that $F$ is ($A,\mathcal F$)-closed, take any $j\in A$, and any $w\in f_j^{-1}\br{f_j''F}$.
  We have to show that $w\in F$.
  Since $w\in f_j^{-1}\br{f_j''F}$, there is $m\in F$ such that $f_j(w)=f_j(m)$.
  Since $m\in F=\bigcup_{\eta\in T}\range(\eta)$, there is $\sigma$ in $T$ such that $\sigma(k)=m$ for some $k<\o$.
  Consider $\sigma\rest(k+1)$. 
  Since $\sigma\rest(k+1)\in T$, by Claim \ref{subclaimtree} there is $m_0\ge 1$ such that $\Phi(m_0)=\sigma\rest(k+1)$, that $\sigma\rest(k+1)$ is a leaf node of $T_{m_0}$ and that (note that $u_{m_0}=\sigma(k)=m$)
  \[\textstyle
   \bigcup_{i\in A}f_i^{-1}(f_i''\set{m})\subset \bigcup_{\eta\in T_{m_0+1}}\range(\eta)\subset \bigcup_{\eta\in T}\range(\eta)=F.
  \]
  So $w\in f_j^{-1}(f_j''\set{m})\subset F$ as required.
 \end{proof}
 
 \begin{claim}\label{finitetree}
  The tree $T$ is finite.
 \end{claim}
 
 \begin{proof}
  First we prove that each level of $T$ is finite.
  For $k<\o$ let $T_{(k)}$ be the $k$-th level of $T$, i.e. $T_{(k)}=\set{\sigma\in T: \abs{\sigma}=k}$.
  Clearly $T_{(0)}$ and $T_{(1)}$ are finite.
  So suppose that $T_{(k)}$ is finite.
  Let $T_{(k)}=\set{\sigma_0,\sigma_2,\dots,\sigma_t}$ be enumeration of that level.
  For $s\le t$ let $m_s$ be such that $\Phi(m_s)=\sigma_s$ and that $\sigma_s$ is a leaf node of $T_{m_s}$.
  Note that by the construction of the tree $T$ all nodes at the level $T_{(k+1)}$ are of the form $\sigma_s^{\frown}\seq{r}$ where $s\le t$ and $r\in Z_{m_s}$.
  Since the set $A$ is finite and all functions $f_i$ (for $i\in A$) are finite-to-one, $Z_{m_s}$ is finite for every $s\le t$.
  Thus there are only finitely many nodes of the form $\sigma_s^{\frown}\seq{r}$ where $s\le t$ and $r\in Z_{m_s}$, hence the level $T_{(k+1)}$ must also be finite.
  This proves by induction that each level of $T$ is finite.
  
  Suppose now that $T$ is infinite.
  By K\"{o}nig's lemma, since each level of $T$ is finite, $T$ has an infinite branch $b$.
  By definition of the sets $Z_m$ ($m<\o$), each node of $T$ is 1-1 function, so $b$ is also an injection from $\o$ into $\o$.
  In particular, the range of $b$ is infinite.
  Let $k=\min\set{m<\o:b(m)\ge l''}$, and let $\sigma=b\rest(k+1)$.
  Clearly, $\sigma\in T$.
  By Claim \ref{subclaimtree}, there is $m_0<\o$ such that $\sigma$ is a leaf node of $T_{m_0}$, that $\Phi(m_0)=\sigma$, and that $\bigcup_{i\in A}f_i^{-1}(f_i''\set{\sigma(k)})\subset \bigcup_{\eta\in T_{m_0+1}}\range(\eta)$.
  Since $\sigma(k)=b(k)\ge l''$, Claim \ref{closedset} implies that the set $Y=f_{i_0}^{-1}(f_{i_0}''\set{\sigma(k)})$ is $(A,\mathcal F)$-closed.
  By the construction $T_{m_0+1}=T_{m_0}\cup\set{\sigma^{\frown}\seq{m}:m\in Z_{m_0}}$.
  Since $b$ is an infinite branch, there is $m'\in Z_{m_0}$ such that $b(k+1)=m'$.
  Now $m'\in Z_{m_0}\subset \bigcup_{i\in A}f_i^{-1}\br{f_i''\set{\sigma(k)}}$, the fact that $\sigma(k)\in Y$, and the fact that $Y$ is ($A,\mathcal F$)-closed, together imply that
  $$\textstyle m'\in \bigcup_{i\in A}f_i^{-1}\br{f_i''\set{\sigma(k)}}\subset \bigcup_{i\in A}f_i^{-1}\br{f_i''Y}\subset Y.$$
  
  Consider the node $\tau=\sigma^{\frown}\seq{m'}=b\rest(k+2)$.
  Since $b$ is an infinite branch, it must be that $\tau^{\frown}\seq{b(k+2)}\in T$.
  By Claim \ref{subclaimtree}, there is $m_1$ such that $\tau$ is a leaf node of $T_{m_1}$ and that $\Phi(m_1)=\tau$.
  Clearly, $m_1>m_0$ and $\tau^{\frown}\seq{b(k+2)}\in T_{m_1+1}$.
  Recall that we have already shown that $\bigcup_{i\in A}f_i^{-1}(f_i''\set{\sigma(k)})\subset \bigcup_{\eta\in T_{m_0+1}}\range(\eta)$.
  Thus $Y\subset\bigcup_{\eta\in T_{m_0+1}}\range(\eta)$.
  This, together with the fact that $\tau(k+1)=m'\in Y$, that $Y$ is $(A,\mathcal F)$-closed, and $m_1>m_0$ jointly imply that
  \[\textstyle
   \bigcup_{i\in A}f_i^{-1}(f_i''\set{\tau(k+1)})\subset Y\subset\bigcup_{\eta\in T_{m_0}+1}\range(\eta)\subset \bigcup_{\eta\in T_{m_1}}\range(\eta).
  \]
  This means that
  \[\textstyle
   b(k+2)\in Z_{n_1}=\bigcup_{i\in A}f_i^{-1}(f_i''\set{\tau(k+1)})\setminus \bigcup_{\eta\in T_{m_1}}\range(\eta)=\ps,
  \]
  which is clearly impossible.
  Thus, $T$ is not infinite.
 \end{proof}
 To finish the proof, note that by Claim \ref{closedtree} the set $F$ is $(A,\mathcal F)$-closed and contains $n$ as an element, while by Claim \ref{finitetree} the set $F$ is finite. So $F$ satisfies all the requirements of the conclusion of the lemma.
\end{proof}

The following lemma is the main application of Martin's Axiom.
Again, it does not directly deal with ultrafilters, but with collections of functions.
\begin{lemma}[MA$_{\a}$]\label{mainlemma}
 Let $\mathcal F=\set{f_i:i<\a}\subset \o^{\o}$ be a family of finite-to-one functions. 
 Suppose that for each non-empty finite set $A\subset\a$, and each $n<\o$, there is a finite ($A,\mathcal F$)-closed set $F$ containing $n$ as an element.
 Then there is a finite-to-one function $h\in\o^{\o}$, and a collection $\set{e_i:i<\a}\subset \o^{\o}$ such that for each $i<\a$, there is $l<\o$ such that $h(n)=e_i(f_i(n))$ whenever $n\ge l$.
\end{lemma}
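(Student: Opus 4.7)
The plan is to apply $\MA_\a$ to a suitable ccc forcing whose generic filter assembles $h$. I take $\P$ to consist of triples $p=(s_p,A_p,\ell^p)$ where $s_p$ is a finite partial function $\o\to\o$, $A_p\in[\a]^{<\o}$, and $\ell^p:A_p\to\o$, subject to
\begin{enumerate}[label=(\roman*)]
 \item $s_p(n)\ge n$ for every $n\in\dom(s_p)$;
 \item for each $i\in A_p$, the set $\dom(s_p)\cap[\ell^p(i),\o)$ is closed under $f_i$-equivalence above $\ell^p(i)$; and
 \item for each $i\in A_p$ and all $n,m\in\dom(s_p)\cap[\ell^p(i),\o)$ with $f_i(n)=f_i(m)$, $s_p(n)=s_p(m)$.
\end{enumerate}
The order is $q\le p$ iff $s_q\supset s_p$, $A_q\supset A_p$, and $\ell^q\supset\ell^p$.

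To verify $\P$ is ccc, I would refine any uncountable family by first fixing $s_p=s$ (possible since $\o^{<\o}$ is countable), then applying the $\Delta$-system lemma to the $A_p$'s to obtain a root $R$, and finally fixing $\ell^p\rest R$. Any two conditions in the refined family amalgamate via the unions of their $A$'s and $\ell$'s, which are consistent on $R$, and conditions (i)--(iii) transfer routinely to the amalgamation.

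The dense sets to meet are $D_n=\set{p:n\in\dom(s_p)}$ for $n<\o$ and $E_i=\set{p:i\in A_p}$ for $i<\a$, totalling $\a$-many. Density of $E_i$ is easy: given $p$ with $i\notin A_p$, adjoin $i$ to $A_p$ with threshold $\ell^q(i)>\max\dom(s_p)$, rendering (ii) and (iii) vacuous for $i$. The main step is density of $D_n$. Given $p$ with $n\notin\dom(s_p)$, let $C_n$ be the closure of $\set{n}$ under the operation ``adjoin $m'$ whenever $m$ is in the current set, $i\in A_p$, $m,m'\ge\ell^p(i)$, and $f_i(m')=f_i(m)$''. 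Since every $(A_p,\mathcal F)$-closed set containing $n$ must contain the full $A_p$-equivalence class of $n$ in $\o$, the hypothesis of the lemma ensures this class, and hence $C_n$, is finite. Moreover $C_n\cap\dom(s_p)=\ps$: otherwise a reversed closure chain combined with (ii) for $p$ would force $n\in\dom(s_p)$. Extending by $s_q=s_p\cup\set{(m,\max C_n):m\in C_n}$, $A_q=A_p$, $\ell^q=\ell^p$ yields a valid $q\le p$ in $D_n$.

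Under $\MA_\a$, a filter $G$ meeting every $D_n$ and $E_i$ yields $h=\bigcup_{p\in G}s_p\in\o^{\o}$; condition (i) forces $h^{-1}(k)\subset[0,k]$, so $h$ is finite-to-one. For each $i<\a$, pick $p_i\in G\cap E_i$ and set $\ell_i=\ell^{p_i}(i)$: any $n,m\ge\ell_i$ with $f_i(n)=f_i(m)$ lie in $\dom(s_q)$ for some $q\le p_i$ in $G$, whence (iii) gives $s_q(n)=s_q(m)$, i.e.\ $h(n)=h(m)$. Setting $e_i(f_i(n))=h(n)$ for $n\ge\ell_i$ and extending $e_i$ arbitrarily to $\o$ completes the proof. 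The main obstacle is securing finite-to-oneness and eventual factorization through all $\a$-many $f_i$'s simultaneously; this is resolved by freezing each per-$i$ threshold once $i$ enters $A_p$, combined with the simple growth condition (i), which prevents any $h$-preimage from becoming infinite.
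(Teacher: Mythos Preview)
Your argument is correct and follows the same global plan as the paper --- apply $\MA_\a$ to a ccc poset of finite approximations to $h$ indexed by finite $A\subset\a$, then read off $h$ and the $e_i$ from a sufficiently generic filter --- but the two posets are organized differently. The paper's conditions carry explicit partial approximations $g^i_p$ to each $e_i$, require the domain $N_p$ to be fully $(A_p,\mathcal F)$-closed, and achieve finite-to-oneness via an ordering clause forcing every new $h$-value to exceed all old ones; as a result, amalgamating two conditions with the same $h_p$ in the ccc proof requires enlarging $N_p$ to an $(A_p\cup A_q,\mathcal F)$-closed set, so the closure hypothesis of the lemma is invoked inside the compatibility argument. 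Your conditions instead attach a threshold $\ell^p(i)$ to each $i\in A_p$ and only demand closure and constancy of $s_p$ above that threshold. This makes adjoining a new index $i$ trivial (set $\ell^q(i)>\max\dom(s_p)$), and the amalgam of two $\Delta$-system conditions with common $s$ is literally the coordinatewise union, so the closure hypothesis is used only once: to bound $C_n$ in the density of $D_n$. The growth condition $s_p(n)\ge n$ replaces the paper's ordering clause for finite-to-oneness. Both routes work; yours is somewhat more streamlined, while the paper's keeps the approximations to the $e_i$ visible throughout and does not need the ``reversed chain'' disjointness argument $C_n\cap\dom(s_p)=\ps$.
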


\begin{proof}
 We will apply MA$_{\a}$, so we first define the poset we will be using. Let $\mathcal P$ be the set of all $p=\seq{g_p,h_p}$ such that
 \begin{enumerate}[label=(\Roman*)]
  \item\label{uslov1} $h_p:N_p\to \o$ where $N_p$ is a finite subset of $\o$,
  \item\label{uslov2} $g_p=\seq{g^i_p:i\in A_p}$ where $A_p\in [\a]^{<\aleph_0}$, and $g^i_p:f_i''N_p\to \o$ for each $i\in A_p$,
  \item\label{uslov3} $N_p$ is ($A_p,\mathcal F$)-closed.
 \end{enumerate}
 Define the ordering relation $\le$ on $\mathcal P$ as follows: $q\le p$ iff
 \begin{enumerate}[resume,label=(\Roman*)]
  \item\label{ext1} $N_p\subset N_q$,
  \item\label{ext2} $A_p\subset A_q$,
  \item\label{ext3} $h_q\rest N_p=h_p$,
  \item\label{ext4} $g_q^i\rest f_i''N_p=g_p^i$ for each $i\in A_p$,
  \item\label{ext5} $h_q(n)>h_q(m)$ whenever $m\in N_p$ and $n\in N_q\setminus N_p$,
  \item\label{ext6} $h_q(n)=g_q^i(f_i(n))$ for each $n\in N_q\setminus N_p$ and $i\in A_p$.  
 \end{enumerate}
 It is clear that $\seq{\mathcal P,\le}$ is a partially ordered set.
 
 \begin{claim}\label{addingn}
  Let $p\in \mathcal P$, $n_0<\o$, and suppose that $A\subset \a$ is finite such that $A_p\subset A$.
  Then there is $q\le p$ such that $n_0\subset N_q$ and that $N_q$ is ($A,\mathcal F$)-closed.
 \end{claim}
 
 \begin{proof}
  Applying the assumption of the lemma to the finite set $A$, and each $k\in n_0\cup N_p$, we obtain sets $F_k$ ($k\in n_0\cup N_p$) such that $k\in F_k$ and $f_i^{-1}(f_i''F_k)\subset F_k$ for each $k\in n_0\cup N_p$ and $i\in A$.
  Let $N_q=\bigcup_{k\in n_0\cup N_p}F_k$, let $A_q=A_p$, and let $t=\max\set{h_p(k)+1:k\in N_p}$.
  Finally, define
  \[
   h_q(n)=\left\{\begin{array}{l} h_p(n),\ \mbox{if}\ n\in N_p\\
                                       t,\ \mbox{if}\ n\in N_q\setminus N_p\end{array}\right.\ \mbox{and}\ g^i_q(k)=\left\{\begin{array}{l} g^i_p(k),\ \mbox{if}\ k\in f_i''N_p\\
                                       t,\ \mbox{if}\ k\in f_i''N_q\setminus f_i''N_p\end{array}\right.\ \mbox{for}\ i\in A_q.
  \]
  Let $g_q$ denote the sequence $\seq{g^i_q:i\in A_q}$.
  Clearly $n_0\subset N_q$.
  By Lemma \ref{finiteunion}, $N_q$ is ($A,\mathcal F$)-closed.
  We still have to show that $q=\seq{g_q,h_q}\in \mathcal P$ and $q\le p$.
  Since $h_q$ is defined on $N_q$ and $N_q$ finite, since $A_q=A_p$ and $g_p^i$ is defined on $f_i''N_q$ for $i\in A_p$, and since $N_q$ is $(A_q,\mathcal F)$-closed, conditions \ref{uslov1}-\ref{uslov3} are satisfied by $q$.
  Thus $q\le p$.
  Next we show $q\le p$.
  Conditions \ref{ext1}-\ref{ext4} are obviously  satisfied by the definition of $q$.
  Since $h_q(n)=t>h_p(k)=h_q(k)$ for each $n\in N_q\setminus N_p$ and $k\in N_p$, conditions \ref{ext4} is also satisfied.
  So we still have to check \ref{ext6}.
  Take any $i\in A_p$ and $n\in N_q\setminus N_p$.
  By the definition of $h_q$, we have $h_q(n)=t$.
  Once we prove that $f_i(n)\in f_i''N_q\setminus f_i''N_p$, we will be done because in that case the definition of $g^i_p$ implies that $g^i_p(f_i(n))=t$ as required.
  So suppose the contrary, that $f_i(n)\in f_i''N_p$.
  Since $p$ is a condition and $A_q=A_p$, it must be that $n\in f_i^{-1}(f_i''N_p)\subset N_p$.
  But this contradicts the choice of $n$.
  Thus condition \ref{ext6} is also satisfied and $q\le p$.
 \end{proof}
 
 \begin{claim}\label{addingj}
  Let $p\in\mathcal P$, and $j_0<\a$. Then there is $q\le p$ such that $j_0\in A_q$.
 \end{claim}
 
 \begin{proof}
  Let $A_q=A_p\cup\set{j_0}$.
  Applying Claim \ref{addingn} to $A_q$ and $n=0$, we obtain a condition $p'\le p$ such that $N_{p'}$ is $(A_q,\mathcal F)$-closed.
  Let $N_q=N_{p'}$, $h_{q}=h_{p'}$, and $g^i_q=g^i_{p'}$ for $i\in A_p$.
  Define $g^{j_0}_q(k)=0$ for each $k\in f_{j_0}''N_{p'}$, and let $g_p$ denote the sequence $\seq{g^i_q:i\in A_q}$.
  Since $j_0\in A_q$, to finish the proof of the claim it is enough to show that $q=\seq{g_q,h_q}\in \mathcal P$, and that $q\le p'$.
  Conditions \ref{uslov1}-\ref{uslov3} are clear from the definition of $q$ because $p'\in\mathcal P$ and $N_q=N_{p'}$, $h_q=h_{p'}$, $g_q^{j_0}:f_{j_0}''N_q\to\o$, and $g_q^i=g_p^i$ for $i\in A_q\setminus\set{j_0}$.
  Conditions \ref{ext1}-\ref{ext4} are clear by the definition of $h_q$ and $g_q$.
  Conditions \ref{ext5} and \ref{ext6} are vacuously true because $N_{p'}=N_q$.
  Thus the claim is proved.
 \end{proof}
 
 \begin{claim}\label{merging}
  If $p,q\in \mathcal P$ are such that $h_p=h_q$ and $g^i_p=g^i_q$ for $i\in A_p\cap A_q$, then $p$ and $q$ are compatible in $\mathcal P$.
 \end{claim}
 
 \begin{proof}
  We proceed to define $r\le p,q$.
  Let $N=N_p=N_q$.
  Let $$t=\max\set{h_p(n)+1:n\in N}.$$
  Applying the assumption of the lemma to $A=A_p\cup A_q$, and each $k\in N$, we obtain ($A,\mathcal F$)-closed sets $F_k$ ($k\in N$).
  By Claim \ref{finiteunion}, the set $N_r=\bigcup_{k\in N}F_k$ is ($A,\mathcal F$)-closed.
  Let $A_r=A$, and define
  \[
   h_r(n)=\left\{\begin{array}{l} h_p(n),\mbox{ if}\ n\in N\\[1mm]
                                       t,\mbox{ if}\ n\in N_r\setminus N\end{array}\right.\hskip-1.5mm\mbox{and }g^i_r(k)=\left\{\begin{array}{l} g^i_p(k),\mbox{ if}\ i\in A_p\mbox{ and }k\in f_i''N\\[1mm]
                                       g^i_q(k),\mbox{ if}\ i\in A_q\mbox{ and }k\in f_i''N\\[1mm]
                                       t,\ \mbox{ if}\ k\in f_i''N_r\setminus f_i''N\end{array}\right.\hskip-1.5mm,
  \]
  for $i\in A_r$. Let $g_r$ denote the sequence $\seq{g^i_r:i\in A_r}$.
  As we have already mentioned, $r=\seq{h_r,g_r}$ satisfies \ref{uslov3}, and it clear that \ref{uslov1} and \ref{uslov2} are also true for $r$.
  To see that $r\le p$ and $r\le q$ note that conditions \ref{ext1}-\ref{ext5} are clearly satisfied.
  We will check that $r$ and $p$ satisfy \ref{ext6} also.
  Take any $n\in N_r\setminus N$ and $i\in A_p$.
  By the definition of $h_r$, $h_r(n)=t$.
  By the definition of $g_r^i$, if $f_i(n)\in f_i''N_r\setminus f_i''N$, then $g^i_r(f_i(n))=t=h_r(n)$.
  So suppose this is not the case, i.e. that $f_i(n)\in f_i''N$.
  This would mean that $n\in f_i^{-1}(f_i''N)$, which is impossible because $n\notin N$ and $N$ is $(A,\mathcal F)$-closed because $p\in \mathcal P$.
  Thus we proved $r\le p$.
  In the same way it can be shown that $r\le q$.
 \end{proof}
 
 \begin{claim}\label{ccc}
  The poset $\mathcal P$ satisfies the countable chain condition.
 \end{claim}
 
 \begin{proof}
  Let $\seq{p_{\xi}:\xi<\o_1}$ be an uncountable set of conditions in $\mathcal P$.
  Since $h_{p_{\xi}}\in [\o\times\o]^{<\o}$ for each $\xi<\o_1$, there is an uncountable set $\Gamma\subset\o_1$, and $h\in [\o\times\o]^{<\o}$ such that $h_{p_{\xi}}=h$ for each $\xi\in \Gamma$.
  Consider the set $\seq{A_{p_{\xi}}:\xi\in\Gamma}$.
  By the $\Delta$-system lemma, there is an uncountable set $\Delta\subset\Gamma$, and a finite set $A\subset\a$ such that $A_{p_{\xi}}\cap A_{p_{\eta}}=A$ for each $\xi,\eta\in\Delta$.
  Since $\Delta$ is uncountable and $g_{p_{\xi}}^i\in [\o\times\o]^{<\o}$ for each $i\in A$ and $\xi\in\Delta$, there is an uncountable set $\Theta\subset\Delta$ and $g_i$ for each $i\in A$, such that $g^i_{p_{\xi}}=g_i$ for each $\xi\in\Theta$ and $i\in A$.
  Let $\xi$ and $\eta$ in $\Theta$ be arbitrary.
  By Claim \ref{merging}, $p_{\xi}$ and $p_{\eta}$ are compatible in $\mathcal P$.
 \end{proof}
 
 Consider sets $D_{j}=\set{p\in \mathcal P: j\in A_p}$ for $j\in\a$, and $D_{m}=\set{p\in \mathcal P:m\in N_p}$ for $m\in\o$.
 By Claim \ref{addingj} and Claim \ref{addingn}, these sets are dense in $\mathcal P$.
 By MA$_{\a}$ there is a filter $\mathcal G\subset \mathcal P$ intersecting all these sets.
 Clearly, $h=\bigcup_{p\in \mathcal G}h_p$ and $e_i=\bigcup_{p\in\mathcal G}g_p^i$, for each $i\in \a$, are functions from $\o$ into $\o$.
 We will prove that these functions satisfy the conclusion of the lemma.
 First we will prove that $h$ is finite-to-one.
 Take any $m\in \o$ and let $k=h(m)$.
 By the definition of $h$, there is $p\in \mathcal G$ such that $h_p(m)=k$.
 Suppose that $h^{-1}(\set{k})\not\subset N_p$.
 This means that there is an integer $m_0\notin N_p$ such that $h(m_0)=k$.
 Let $q\in \mathcal G$ be such that $h_q(m_0)=k$.
 Now for a common extension $r\in\mathcal G$ of both $p$ and $q$, it must be that $h_r(m_0)=h_p(m)$, contradicting the fact that $r\le p$, in particular condition \ref{ext5} is violated in this case.
 We still have to show that for each $i\in \a$, there is $l\in\o$ such that $h(n)=e_i(f_i(n))$ whenever $n\ge l$.
 So take $i\in\a$. 
 By Claim \ref{addingj}, there is $p\in\mathcal G$ such that $i\in A_p$.
 Let $l=\max(N_p)+1$.
 We will prove that $l$ is as required.
 Take any $n\ge l$.
 By Claim \ref{addingn}, there is $q\in\mathcal G$ such that $n\in q$.
 Let $r\in \mathcal G$ be a common extension of $p$ and $q$.
 Since $n\notin N_p$ and $r\le p$, it must be that $h_r(n)=g_r^i(f_i(n))$, according to condition \ref{ext6}.
 Hence $h(n)=e_i(f_i(n))$, as required.
\end{proof}

Before we move to the next lemma let us recall that if $c$ is any element of the model $M$, then $\cu=\set{X\subset \o: c\in {}^*X}$ is ultrafilter on $\o$.

\begin{lemma}\label{ensuringensuring}
	Let $\a<\c$ be an ordinal.
	Let $\seq{M_i:i<\a}$ be a $\subset$-decreasing sequence of principal submodels of $M$, i.e. each $M_i$ is generated by a single element $a_i$ and $M_j\subset M_i$ whenever $i<j<\a$.
	Let each $M_i$ ($i<\a$) be cofinal with $M_0$.
	Suppose that $\mathcal U_0=\set{X\subset \o: a_0\in \prescript{*}{}X}$ is a $P_{\c}$-point.
	Then there is a family  $\set{f_i:i<\a}\subset \o^{\o}$ of finite-to-one functions such that $\prescript{*}{}f_i(a_0)=a_i$ for $i<\a$, and that for $i,j<\a$ with $i<j$, there is $l<\o$ such that $f_j(n)=f_j(m)$ whenever $f_i(n)=f_i(m)$ and $m,n\ge l$.
\end{lemma}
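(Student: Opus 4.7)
The plan is to unfold the hypothesis into three pieces of data living inside $\cu_0$, combine them via the $P_\c$-point property, and then patch the resulting partial data into globally defined finite-to-one functions using a simple ``delegation'' trick.

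First, since $M_i\subset M_0$ for every $i<\a$, we have $a_i\in M_0$, so for each $i<\a$ pick $f_i^0\in\o^{\o}$ with ${}^*f_i^0(a_0)=a_i$. The cofinality of $M_i$ with $M_0$ forces each $a_i$ to be nonstandard (otherwise every element of $M_i$ would be a standard natural number, which cannot be ${}^*\!\!\ge a_0$), so by the classical characterization of P-points there is a set $A_i\in\cu_0$ on which $f_i^0$ is finite-to-one. For each pair $i<j<\a$, the inclusion $M_j\subset M_i$ gives some $g_{ij}\in\o^{\o}$ with ${}^*g_{ij}(a_i)=a_j$; then $f_j^0$ and $g_{ij}\circ f_i^0$ agree at $a_0$ in $M$, so $X_{ij}=\set{n<\o:f_j^0(n)=g_{ij}(f_i^0(n))}$ belongs to $\cu_0$. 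Since the collection $\set{A_i:i<\a}\cup\set{X_{ij}:i<j<\a}$ has size less than $\c$, the $P_\c$-point hypothesis yields a single $X\in\cu_0$ such that $X\subset^* A_i$ for every $i<\a$ and $X\subset^* X_{ij}$ for every pair $i<j<\a$.

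To obtain a globally finite-to-one $f_i$ while preserving ${}^*f_i(a_0)=a_i$, I would uniformly delegate each $n\notin X$ to the least $m_n\in X$ with $m_n\ge n$, and set $f_i(n)=f_i^0(n)$ for $n\in X$ and $f_i(n)=f_i^0(m_n)$ for $n\notin X$ --- using the same $m_n$ for every $i$. Since $X\in\cu_0$, ${}^*f_i(a_0)=a_i$. Finite-to-oneness of $f_i$ follows because $f_i^0\rest X$ is finite-to-one (as $X\subset^* A_i$) and because the delegation map $n\mapsto m_n$ is finite-to-one: the preimage of any $s\in X$ sits in the gap between $s$ and the preceding element of $X$. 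For the coherence, given $i<j$, pick $l$ so large that $X\cap[l,\o)\subset X_{ij}$ and the ``modulo finite'' losses above are confined to $[0,l)$; then for $n\ge l$, $f_i(n)$ has the form $f_i^0(n^*)$ for some $n^*\in X\cap[l,\o)\subset X_{ij}$ (with $n^*=n$ when $n\in X$, and $n^*=m_n$ otherwise), so the identity $f_j^0=g_{ij}\circ f_i^0$ on $X_{ij}$ propagates any equation $f_i(n)=f_i(m)$ to $f_j(n)=f_j(m)$.

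The delicate point is this final step: extending each $f_i^0\rest X$ to all of $\o$ without either destroying finite-to-oneness or breaking the coherence across distinct pairs $(i,j)$. Using a single delegation $n\mapsto m_n$, independent of $i$, is what lets the extension succeed simultaneously for every pair.
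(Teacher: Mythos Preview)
Your argument is correct and follows the same overall strategy as the paper: obtain maps $a_0\mapsto a_i$, factor $a_i\mapsto a_j$, record the coherence sets $X_{ij}\in\cu_0$, apply the $P_\c$-point property to get a single $X$, and then globalize. The difference is entirely in the last step. The paper first arranges globally finite-to-one maps $g_i$ (by setting $g_i(n)=n$ off a suitable set), and then, for each $i$ separately, splits the image $g_i''X$ into two disjoint infinite pieces $W_i^0,W_i^1$, refines $X$ to some $Y_i\in\cu_0$ with $g_i''Y_i\subset W_i^0$, and defines $f_i$ on $\o\setminus Y_i$ as an injection into $W_i^1$; the disjointness of $W_i^0$ and $W_i^1$ is what kills the mixed case in the coherence check. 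Your uniform delegation $n\mapsto m_n$ (independent of $i$) is a cleaner device: it simply reparametrizes $f_i^0\rest X$ by a single finite-to-one map, so finite-to-oneness is immediate and the coherence argument reduces to one line, with no case split and no per-$i$ refinement of $X$. Both approaches work; yours is shorter and sidesteps the bookkeeping of the image partition.
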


\begin{proof}
	Let $i<j<\a$. Since $M_j\subset M_i$, and $M_i$ is generated by $a_i$, there is a function $\varphi_{ij}:\o\to\o$ such that $\prescript{*}{}\varphi_{ij}(a_i)=a_j$.
	Since $M_j$ is cofinal with $M_i$, by Lemma in \cite[page 104]{blassmodeltheory}, if $i<j<\a$, then there is a set $Y_{ij}\subset\o$ such that $a_i\in \prescript{*}{}Y_{ij}$ and that $\varphi_{ij}\rest Y_{ij}$ is finite-to-one.
	For $i<\a$ let $g_i:\o\to\o$ be defined as follows: if $n<\o$, then for $n\notin Y_{0i}$ let $g_i(n)=n$, while for $n\in Y_{0i}$ let $g_i(n)=\varphi_{0i}(n)$.
	Note that $\prescript{*}{}g_i(a_0)=a_i$, and that $g_i$ is finite-to-one.
	The latter fact follows since $g_i$ is one-to-one on $\o\setminus Y_{0i}$ and on $Y_{0i}$ it is equal to $\varphi_{0i}$, which is finite-to-one on $Y_{0i}$.
	Now by the second part of Lemma on page 104 in \cite{blassmodeltheory}, for $i<j<\a$ there is a finite-to-one function $\pi_{ij}:\o\to\o$ such that $\prescript{*}{}\varphi_{ij}(a_i)=\prescript{*}{}\pi_{ij}(a_i)$.
	Note that this means that $\prescript{*}{}g_j(a_0)=\prescript{*}{}\pi_{ij}(\prescript{*}{}g_i(a_0))$ for $i<j<\a$, i.e. the set $X_{ij}=\set{n\in\o:g_j(n)=\pi_{ij}(g_i(n))}$ is in $\cu_0$.
	Since $\a<\mathfrak c$ and $\cu_0$ is a $P_{\c}$-point, there is a set $X\subset \o$ such that $X\in\cu_0$ and that the set $X\setminus X_{ij}$ is finite whenever $i<j<\a$.
	Consider the sets $W_i=g_i''X$ for $i<\a$.
	For each $i<\a$, let $W_i=W_i^0\cup W_i^1$ where $W_i^0\cap W_i^1=\ps$ and both $W_i^0$ and $W_i^1$ are infinite.
	Fix $i<\a$ for a moment.
	We know that
	\[\textstyle
	X=\left(X\cap \bigcup_{n\in W_i^0}g_i^{-1}(\set{n})\right)\cup \left(X\cap \bigcup_{n\in W_i^1}g_i^{-1}(\set{n})\right).
	\]
	Since $X\in\cu_0$ and $\cu_0$ is an ultrafilter, we have that either $X\cap \bigcup_{n\in W_i^0}g_i^{-1}(\set{n})\in\cu_0$ or $X\cap \bigcup_{n\in W_i^1}g_i^{-1}(\set{n})\in\cu_0$.
	Suppose that $Y=X\cap \bigcup_{n\in W_i^0}g_i^{-1}(\set{n})\in \cu_0$ (the other case would be handled similarly).
	Note that by the definition of $\cu_0$ we know that $a_0\in {}^*Y$.
	Define $f_i:\o\to\o$ as follows: for $n\in Y$ let $f_i(n)=g_i(n)$, while for $n\notin Y$ let $f_i(n)=W_i^1(n)$.
	Now that functions $f_i$ are defined, unfix $i$.
	We will prove that $\mathcal F=\set{f_i:i<\a}$ has all the properties from the conclusion of the lemma.
	Since each $g_i$ ($i<\a$) is finite-to-one, it is clear that $f_i$ is also finite-to-one.
	Again, this is because $g_i$ is finite-to-one on $\o$, and outside of $Y$ the function $f_i$ is defined so that it is one-to-one.
	Since $\prescript{*}{}g_i(a_0)=a_i$ and $a_0\in {}^*Y$, it must be that $\prescript{*}{}f_i(a_0)=a_i$ for each $i<\a$.
	Now we prove the last property.
	Suppose that $i<j<\a$.
	Since the set $X\setminus X_{ij}$ is finite and $Y\subset X$, there is $l<\o$ so that $Y\setminus l\subset X_{ij}$.
	Take $m,n\ge l$, and suppose that $f_i(n)=f_i(m)$.
	There are three cases.
	First, $n,m\notin Y$.
	In this case, $f_i(n)=f_i(m)$ implies that $W_i^1(n)=W_i^1(m)$, i.e. $n=m$.
	Hence $f_j(n)=f_j(m)$.
	Second, $m\in Y$ and $n\notin Y$.
	Since $m\in Y$, $g_i(m)=f_i(m)=f_i(n)$ so $f_i(n)\in W_i^0$.
	On the other hand, since $n\notin Y$, $f_i(n)=W_i^1(n)$.
	Thus we have $f_i(n)\in W_i^0\cap W_i^1$ which is in contradiction with the fact that $W_i^0\cap W_i^1=\ps$.
	So this case is not possible.
	Third, $m,n\in Y$.
	In this case $f_i(n)=f_i(m)$ implies that $g_i(n)=g_i(m)$.
	Since $m,n\in Y\setminus l\subset X_{ij}$ it must be that $f_j(n)=g_j(n)=\pi_{ij}(g_i(n))=\pi_{ij}(g_i(m))=g_j(m)=f_j(m)$ as required.
	Thus the lemma is proved.
\end{proof}

\begin{lemma}[MA$_{\a}$]\label{ensuringtheorem3}
 Let $\seq{M_i:i<\a}$ be a $\subset$-decreasing sequence of principal, and pairwise cofinal submodels of $M$.
 Suppose that $\mathcal U_0=\set{X\subset \o: a_0\in \prescript{*}{}X}$ is a $P_{\c}$-point, where $a_0$ generates $M_0$.
 Then there is an element $c\in\bigcap_{i<\a}M_i$ which generates a principal model cofinal with all $M_i$ ($i<\a$). 
\end{lemma}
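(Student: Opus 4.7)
The plan is to chain together the three combinatorial lemmas developed so far and then read off the desired element of $M$ as the image of $a_0$ under a suitable function.

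First, apply Lemma \ref{ensuringensuring} to $\seq{M_i : i < \a}$ to extract a family $\mathcal F = \set{f_i : i < \a} \subset \o^\o$ of finite-to-one functions satisfying $\prescript{*}{}f_i(a_0) = a_i$ for all $i < \a$, and such that for every $i < j < \a$ there is some $l < \o$ with $f_j(n) = f_j(m)$ whenever $f_i(n) = f_i(m)$ and $m, n \ge l$. This is exactly the hypothesis of Lemma \ref{ensuringcondition}, which then guarantees that for every finite $A \subset \a$ and every $n < \o$ there is a finite $(A, \mathcal F)$-closed subset of $\o$ containing $n$. Plugging $\mathcal F$ into Lemma \ref{mainlemma} (which is where MA$_\a$ enters) produces a finite-to-one function $h \in \o^\o$ together with a sequence $\set{e_i : i < \a} \subset \o^\o$ such that for each $i < \a$ there is $l_i < \o$ with $h(n) = e_i(f_i(n))$ for all $n \ge l_i$.

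Take $c = \prescript{*}{}h(a_0)$ as the candidate element. The case where $\cu_0$ is principal is trivial, so assume $a_0$ is nonstandard; then $a_0 \;{}^*\!\!\ge\; l_i$ for every $i < \a$. By elementarity, $c = \prescript{*}{}h(a_0) = \prescript{*}{}e_i(\prescript{*}{}f_i(a_0)) = \prescript{*}{}e_i(a_i)$, which lies in the principal submodel $M_i$ generated by $a_i$, so $c \in \bigcap_{i<\a} M_i$. It remains to verify that the principal submodel $M^* = \set{\prescript{*}{}k(c) : k \in \o^\o}$ generated by $c$ is cofinal with each $M_i$. Since the $M_i$ are pairwise cofinal, it suffices to establish cofinality of $M^*$ with $M_0$. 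The inclusion $c \in M_0$ gives $M^* \subset M_0$, which disposes of one direction. For the other, fix an arbitrary $b = \prescript{*}{}g(a_0) \in M_0$, and define $k : \o \to \o$ by $k(j) = \max(g''h^{-1}(\set{j}))$ when $h^{-1}(\set{j}) \neq \ps$ (a finite set, since $h$ is finite-to-one) and $k(j) = 0$ otherwise. Then $(k \circ h)(n) \ge g(n)$ for every $n < \o$, so by transfer $\prescript{*}{}k(c) = \prescript{*}{}(k \circ h)(a_0) \;{}^*\!\!\ge\; \prescript{*}{}g(a_0) = b$, exhibiting an element of $M^*$ that dominates $b$.

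The only nontrivial step is the orchestration of the three preceding lemmas, and the sole new ingredient is the observation that finite-to-one-ness of $h$ (supplied for free by Lemma \ref{mainlemma}) is exactly what is needed to make every element of $M_0$ dominated by an element of $M^*$, and simultaneously to make $c$ lie in each $M_i$ via the eventual factorization $h = e_i \circ f_i$. Verifying that $a_0$ being nonstandard suffices to push this factorization past the threshold $l_i$ is the subtlest point, but it is immediate once stated.
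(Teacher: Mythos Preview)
Your proof is correct and follows essentially the same route as the paper: chain Lemmas~\ref{ensuringensuring}, \ref{ensuringcondition}, and \ref{mainlemma}, set $c=\prescript{*}{}h(a_0)$, and use the eventual factorization $h=e_i\circ f_i$ together with nonstandardness of $a_0$ to place $c$ in each $M_i$. The only difference is cosmetic: where the paper cites Blass's Lemma from \cite[p.~104]{blassmodeltheory} to conclude that the model generated by $c$ is cofinal with $M_0$ (since $h$ is finite-to-one), you write out that short argument by hand via the function $k(j)=\max(g''h^{-1}(\{j\}))$.
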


\begin{proof}
 Let $a_i$ for $i<\a$ be an element generating $M_i$.
 By Lemma \ref{ensuringensuring} there is a family $\mathcal F=\set{f_i:i<\a}\subset \o^{\o}$ of finite-to-one functions such that $\prescript{*}{}f_i(a_0)=a_i$ for $i<\a$, and that for $i,j<\a$ with $i<j$, there is $l<\o$ such that $f_j(n)=f_j(m)$ whenever $f_i(n)=f_i(m)$ and $m,n\ge l$.
 By Lemma \ref{ensuringcondition}, for each finite $A\subset \a$, and each $n<\o$, there is a finite ($A,\mathcal F$)-closed set containing $n$ as an element.
 Now using MA$_{\a}$, Lemma \ref{mainlemma} implies that there is a finite-to-one function $h\in\o^{\o}$, and a collection $\set{e_i:i<\a}\subset\o^{\o}$ such that for each $i<\a$ there is $l<\o$ such that $h(n)=e_i(f_i(n))$ whenever $n\ge l$.
 
 Let $c=\prescript{*}{}h(a_0)$, and let $M_{\a}$ be a model generated by $c$.
 By Lemma in \cite[pp. 104]{blassmodeltheory}, $M_{\a}$ is cofinal with $M_0$.
 Thus $M_{\a}$ is a principal model cofinal with all $M_i$ ($i<\a$).
 To finish the proof we still have to show that $c\in \bigcap_{i<\a}M_i$.
 Fix $i<\a$.
 Let $l<\o$ be such that $h(n)=e_i(f_i(n))$ for $n\ge l$.
 If $a_0<l$, then $M_j=M$ for each $j<\a$ so the conclusion is trivially satisfied. So $a_0\prescript{*}{}\ge l$.
 Since the sentence $(\forall n) [n\ge l\sledi h(n)=e_i(f_i(n))]$ is true in $M$, it is also true in $M_0$.
 Thus $c=\prescript{*}{}h(a_0)=\prescript{*}{}e_i(\prescript{*}{}f_i(a_0))=\prescript{*}{}e_i(a_i)\in M_i$ as required.
\end{proof}

\begin{theorem}[MA$_{\a}$]\label{theorem3}
 Let $M_i$ ($i<\a$) be a collection of pairwise cofinal submodels of $M$.
 Suppose that $M_{0}$ is principal, and that $\mathcal U_0=\set{X\subset \o: a_0\in \prescript{*}{}X}$ is a $P_{\c}$-point, where $a_0$ generates $M_0$.
 Then $\bigcap_{i<\a}M_i$ contains a principal submodel cofinal with each $M_i$.
\end{theorem}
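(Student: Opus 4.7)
The plan is to reduce Theorem \ref{theorem3} to Lemma \ref{ensuringtheorem3} by extracting a $\subset$-decreasing sequence $\seq{N_i:i<\a}$ of principal submodels with $N_0=M_0$, $N_i\subset M_i$, and pairwise cofinal. Once such a sequence is available, a final application of Lemma \ref{ensuringtheorem3} to $\seq{N_i:i<\a}$ yields a principal submodel inside $\bigcap_{i<\a}N_i\subset\bigcap_{i<\a}M_i$ cofinal with each $N_i$, and by transitivity of the cofinality relation, cofinal with each $M_i$, as required.

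First I would replace each $M_i$ by a principal submodel $P_i\subset M_i$ which is still cofinal with $M_0$. Since $M_i$ is cofinal with $M_0$, there is $c_i\in M_i$ with $a_0\ {}^*\!\!\le c_i$; let $P_i$ be the principal submodel generated by $c_i$. A short verification shows $P_i$ is cofinal with $M_0$: any ${}^*\!f(a_0)\in M_0$ is dominated by ${}^*\!g(c_i)\in P_i$ for the non-decreasing function $g(n)=\max\set{f(k):k\le n}$, while any element of $P_i\subset M_i$ is dominated by some element of $M_0$ using cofinality of $M_i$ with $M_0$. Thus $P_i\subset M_i$ is principal and cofinal with $M_0$, and hence pairwise cofinal with every $P_j$ and every $M_j$ by transitivity.

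Next, construct $\seq{N_i:i<\a}$ by transfinite induction, maintaining at stage $i$ that $N_i$ is principal, $N_i\subset N_j$ whenever $j<i$, $N_i\subset P_i$, and $N_i$ is cofinal with $M_0$. Set $N_0=M_0$. At a successor step $i=j+1$, $N_j$ and $P_{j+1}$ are principal submodels which are cofinal with each other (both being cofinal with $M_0$), so applying Theorem \ref{t:blassmodel} to the countable sequence $N_j,P_{j+1},N_j,P_{j+1},\dots$ yields a principal submodel $N_{j+1}\subset N_j\cap P_{j+1}$ cofinal with both. At a limit $i<\a$, the sequence $\seq{N_j:j<i}$ is already $\subset$-decreasing, consists of principal submodels pairwise cofinal with each other, and $N_0=M_0$ generates a $P_{\c}$-point ultrafilter, so Lemma \ref{ensuringtheorem3} (invoking MA$_i$, which follows from MA$_\a$) provides a principal submodel $N'\subset\bigcap_{j<i}N_j$ cofinal with each $N_j$. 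Then $N'$ and $P_i$ are both principal and cofinal with $M_0$, so a further application of Theorem \ref{t:blassmodel} delivers a principal $N_i\subset N'\cap P_i$ cofinal with both, completing the inductive step.

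I expect the main difficulty to lie in the limit stages of this induction, where a principal representative of $\bigcap_{j<i}N_j$ must be extracted to allow the construction to proceed; this is exactly what Lemma \ref{ensuringtheorem3} supplies. The successor stages are clean two-model intersections handled by Theorem \ref{t:blassmodel}. With the sequence $\seq{N_i:i<\a}$ in hand, one last application of Lemma \ref{ensuringtheorem3} produces the required principal submodel of $\bigcap_{i<\a}M_i$ cofinal with each $M_i$.
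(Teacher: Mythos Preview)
Your proposal is correct and follows essentially the same architecture as the paper's proof: build by transfinite recursion a $\subset$-decreasing chain of principal, pairwise cofinal submodels lying inside the $M_i$'s, invoking the two-model intersection result (the paper cites Blass's Corollary, you invoke Theorem~\ref{t:blassmodel} on an alternating sequence, which amounts to the same thing) at successor steps and Lemma~\ref{ensuringtheorem3} at limit steps; then apply Lemma~\ref{ensuringtheorem3} once more at the end.

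Two minor remarks. First, your preliminary passage from $M_i$ to a principal $P_i\subset M_i$ is not strictly needed as a separate step, since the two-model intersection result already produces a principal submodel inside $N_j\cap M_{i}$ directly; the paper absorbs this into the successor step. Second, your treatment of limit stages $i<\a$ is actually \emph{more} careful than the paper's: you explicitly intersect the output $N'$ of Lemma~\ref{ensuringtheorem3} with $P_i$ to ensure $N_i\subset M_i$, whereas the paper's recursion, as written, only puts $M'_i$ inside $\bigcap_{j<i}M'_j$ at limit $i$ and never revisits $M_i$ for limit $i$. Your extra step closes this small gap cleanly.
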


\begin{proof}
 We define models $M'_i$ for $i\le\a$ as follows. $M'_0=M_0$. If $M'_i$ is defined, then $M'_{i+1}$ is a principal submodel of $M'_i\cap M_{i+1}$ cofinal with $M'_i$ and $M_{i+1}$. This model exists by Corollary in \cite[pp. 105]{blassmodeltheory}. If $i\le \a$ is limit, then the model $M'_i$ is a principal model cofinal with all $M'_j$ ($j<i$). This model exists by Lemma \ref{ensuringtheorem3}. Now the model $M'_{\a}$ is as required in the conclusion of the lemma.
\end{proof}

\begin{theorem}[MA$_{\a}$]\label{maintheorem}
 Suppose that $\set{\cu_i:i<\a}$ is a collection of P-points.
 Suppose moreover that $\cu_0$ is a P$_{\c}$-point such that $\cu_i\le_{RK}\cu_0$ for each $i<\a$.
 Then there is a P-point $\cu$ such that $\cu\le_{RK} \cu_i$ for each $i$.
\end{theorem}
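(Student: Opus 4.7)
The plan is to rephrase Theorem \ref{maintheorem} as an instance of Theorem \ref{theorem3}. First I would set up the ambient model: let $M := N^{\o}/\cu_0$ be the ultrapower of the standard model and $a_0 := [\id]_{\cu_0} \in M$, so that $M$ is itself the principal submodel generated by $a_0$ and $\cu_{a_0} = \cu_0$.

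Next, for each $i<\a$, pick $g_i \in \o^{\o}$ witnessing $\cu_i \le_{RK} \cu_0$. Because $\cu_i$ is non-principal and $\cu_0$ is a P-point (being a $P_{\c}$-point), each set $\o \setminus g_i^{-1}(\set{n})$ lies in $\cu_0$, so the countable family $\set{\o \setminus g_i^{-1}(\set{n}) : n<\o}$ admits a pseudointersection $Y_i \in \cu_0$ on which $g_i$ is finite-to-one. After replacing $g_i$ on $\o \setminus Y_i$ by the identity we may assume $g_i$ is globally finite-to-one while still $\cu_i = g_i(\cu_0)$. Set $a_i := {}^*g_i(a_0)$ and let $M_i$ be the principal submodel of $M$ generated by $a_i$. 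Elementarity yields $\cu_{a_i} = \cu_i$, and the lemma on page 104 of \cite{blassmodeltheory} used inside Lemma \ref{ensuringensuring} gives that $M_i$ is cofinal with $M$.

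Cofinality with $M$ is an equivalence relation, so $\set{M_i : i < \a}$ is a pairwise cofinal family of submodels of $M$; moreover $M_0 := M$ is principal and its associated ultrafilter $\cu_0$ is a $P_{\c}$-point. Applying Theorem \ref{theorem3} produces $c \in \bigcap_{i<\a} M_i$ generating a principal submodel cofinal with each $M_i$. Let $\cu := \cu_c$. For each $i<\a$, the element $c$ lies in $M_i$, so there is $h_i \in \o^{\o}$ with $c = {}^*h_i(a_i)$; elementarity then yields $\cu = h_i(\cu_i)$, whence $\cu \le_{RK} \cu_i$. Since P-points are downward-closed under $\le_{RK}$ and every $\cu_i$ is a P-point, $\cu$ is itself a P-point. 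Finally, $\cu$ is non-principal because $c$ must be non-standard: a standard generator produces the submodel $N$, which is not cofinal with $M$.

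The essential obstacle of Theorem \ref{maintheorem} — extending Blass's lower bound theorem to families of size less than $\c$ under MA — has already been overcome in Theorem \ref{theorem3}. The only subtle point in this translation is producing globally finite-to-one representatives $g_i$ of the RK-reductions so that the cofinality of $M_i$ with $M$ is witnessed correctly, and this is where the P-point property of $\cu_0$ enters beyond its role inside Theorem \ref{theorem3}.
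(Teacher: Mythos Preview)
Your proof is correct and follows essentially the same route as the paper: translate into the model-theoretic setting, apply Theorem~\ref{theorem3}, and translate back. Where the paper cites Theorem~3 and the Corollary on p.~150 of \cite{blass} to obtain the cofinal principal submodels $M_i$ and to recover $\cu \le_{RK} \cu_i$ from $M' \subset M_i$, you unpack these citations by hand (producing finite-to-one representatives $g_i$ via the P-point property of $\cu_0$ and invoking the page-104 lemma directly), but this is the same argument with the black boxes unfolded.
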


\begin{proof}
 By Theorem 3 of \cite{blass}, $\o^{\o}/\cu_i$ is isomorphic to an elementary submodel $M_i$ of $\o^{\o}/\cu_0$.
 Since all $\cu_i$ ($i<\a$) are non-principal, each model $M_i$ ($i<\a$) is non-standard.
 By Corollary in \cite[pp. 150]{blass}, each $M_i$ ($i<\a$) is cofinal with $M_0$.
 This implies that all the models $M_i$ ($i<\a$) are pairwise cofinal with each other.
 By Theorem \ref{theorem3} there is a principal model $M'$ which is a subset of each $M_i$ ($i<\a$) and is cofinal with $M_0$.
 Since $M'$ is principal, there is an element $a$ generating $M'$.
 Let $\cu=\set{X\subset \o:a\in \prescript{*}{}X}$.
 Then $\o^{\o}/\cu\cong M'$.
 Since $M'$ is cofinal with $M_0$, $M'$ is not the standard model.
 Thus $\cu$ is non-principal.
 Now $M'\prec M_i$ ($i<\a$) implies that $\cu\le_{RK}\cu_i$ (again using Theorem 3 of \cite{blass}).
 Since $\cu$ is Rudin-Keisler below a $P$-point, $\cu$ is also a $P$-point.
\end{proof}
\begin{corollary}[MA] \label{cor:lower}
 If a collection of fewer than $\c$ many ${P}_{\c}$-points has an upper bound which is a ${P}_{\c}$-point, then it has a lower bound.
\end{corollary}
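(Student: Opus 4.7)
The plan is to obtain Corollary \ref{cor:lower} as a direct consequence of Theorem \ref{maintheorem}. Since MA is by definition the assertion that MA$_{\a}$ holds for every $\a<\c$, the hypothesis MA$_{\a}$ of Theorem \ref{maintheorem} is automatically satisfied for any $\a<\c$ that arises in the argument.

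To set things up, I would enumerate the given collection of $P_{\c}$-points as $\set{\cv_i:i<\l}$ with $\l<\c$ and denote the assumed $P_{\c}$-point RK upper bound by $\cv$. Theorem \ref{maintheorem} requires the distinguished $P_{\c}$-point to appear as the zeroth element of the indexed family, so I would reindex by setting $\cu_0=\cv$ and $\cu_{1+i}=\cv_i$ for $i<\l$. Then $\set{\cu_i:i<1+\l}$ is a family of P-points with $\cu_0$ a $P_{\c}$-point and $\cu_i\le_{RK}\cu_0$ for every $i<1+\l$. Since $1+\l<\c$, Theorem \ref{maintheorem} yields a P-point $\cu$ with $\cu\le_{RK}\cu_i$ for every $i<1+\l$; in particular, $\cu$ is an RK lower bound of the original family $\set{\cv_i:i<\l}$.

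It remains to observe that $\cu$ is in fact a $P_{\c}$-point, which matches the statement in the abstract. This follows from the general fact that the class of $P_{\c}$-points is closed downwards under $\le_{RK}$: if $f$ witnesses $\cu\le_{RK}\cv$ and $\set{a_i:i<\b}\subset\cu$ with $\b<\c$, then $\set{f^{-1}(a_i):i<\b}\subset\cv$ has a pseudo-intersection $b\in\cv$ by the $P_{\c}$-property of $\cv$, and $f''b\in\cu$ is almost contained in each $a_i$ because $f''b\setminus a_i\subset f''(b\setminus f^{-1}(a_i))$ is the image of a finite set. There is no genuine obstacle here: Corollary \ref{cor:lower} is essentially a cosmetic repackaging of Theorem \ref{maintheorem}, combined with this standard downward-closure fact for $P_{\c}$-points; the only thing to be mindful of is the reindexing that places the $P_{\c}$-point upper bound in the zeroth coordinate as required by the statement of Theorem \ref{maintheorem}.
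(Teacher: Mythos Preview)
Your proposal is correct and is precisely the intended derivation: the paper leaves Corollary~\ref{cor:lower} without proof because it is an immediate consequence of Theorem~\ref{maintheorem} after the obvious reindexing you describe. Your additional observation that the lower bound is itself a $P_{\c}$-point is not part of the corollary's literal statement but is correct and is exactly the downward-closure fact the paper notes in the introduction.
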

\begin{corollary}[MA] \label{cor:closed}
 The class of ${P}_{\c}$-points is downwards $< \c$-closed under ${\leq}_{RK}$.
 In other words, if $\alpha < \c$ and $\langle {\cu}_{i}: i < \alpha \rangle$ is a sequence of ${P}_{\c}$-points such that $\forall i < j < \alpha\left[{\cu}_{j} \: {\leq}_{RK} \: {\cu}_{i}\right]$, then there is a ${P}_{\c}$-point $\cu$ such that $\forall i < \alpha\left[ \cu \: {\leq}_{RK} \: {\cu}_{i} \right]$.
\end{corollary}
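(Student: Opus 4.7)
The corollary follows directly from Theorem \ref{maintheorem} once one observes that a ${\leq}_{RK}$-descending sequence contains its own upper bound. My plan is to apply the theorem with $\cu_0$ playing the role of the $P_{\c}$-point upper bound, and then upgrade the resulting P-point to a $P_{\c}$-point using the downward closure of $P_{\c}$-ness under ${\leq}_{RK}$. First I dispose of the degenerate cases: if $\alpha = 0$ take $\cu$ to be any $P_{\c}$-point (one exists under $\MA$), and if $\alpha = 1$ set $\cu = \cu_0$.

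For $2 \le \alpha < \c$, the hypothesis $\cu_j {\leq}_{RK} \cu_i$ (for $i < j$) specializes at $i=0$ to $\cu_j {\leq}_{RK} \cu_0$ for every $j < \alpha$, so $\cu_0$ itself is a $P_{\c}$-point that serves as an RK upper bound for the family $\set{\cu_i : i < \alpha}$. Applying Theorem \ref{maintheorem} with this upper bound yields a P-point $\cu$ with $\cu {\leq}_{RK} \cu_i$ for every $i < \alpha$.

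The remaining step, and the only place where the full $P_{\c}$ hypothesis on each $\cu_i$ (rather than merely being a P-point) gets used, is to promote $\cu$ from a P-point to a $P_{\c}$-point. Since $\cu {\leq}_{RK} \cu_0$ and $\cu_0$ is a $P_{\c}$-point, I invoke the downward closure of the class of $P_{\c}$-points under ${\leq}_{RK}$ noted in the introduction: given $f : \o \to \o$ witnessing $\cu {\leq}_{RK} \cu_0$ and any family $\set{X_\xi : \xi < \beta} \subset \cu$ with $\beta < \c$, the pullbacks $\set{f^{-1}(X_\xi) : \xi < \beta}$ all lie in $\cu_0$, so by the $P_{\c}$-property there is $Y \in \cu_0$ with $Y \subset^* f^{-1}(X_\xi)$ for every $\xi$; then $f''Y \in \cu$ (because $f^{-1}(f''Y) \supset Y \in \cu_0$) and $f''Y \subset^* X_\xi$ for each $\xi$. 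No real obstacle arises at this stage; the corollary is a clean repackaging of Theorem \ref{maintheorem} together with this one elementary closure remark.
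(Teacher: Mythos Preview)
Your argument is correct and matches the paper's intended derivation: the corollary is stated there without proof, as an immediate consequence of Theorem \ref{maintheorem} together with the observation from the introduction that the ${P}_{\c}$-points form an $RK$-downward closed class. Your handling of the degenerate cases and your explicit verification of the downward closure (showing $f''Y \in \cu$ and $f''Y \subset^{*} X_{\xi}$) are fine; one small remark is that only the ${P}_{\c}$-property of $\cu_{0}$ is ever invoked, so the hypothesis that the remaining $\cu_{i}$ are ${P}_{\c}$-points rather than mere P-points is not actually used.
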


\end{document}